\newcommand{\Gr}{Gr\"obner }
\def\LM{{\mathrm{LM}}}
\def\LC{{\mathrm{LC}}}
\def\LT{{\mathrm{LT}}}
\def\poly{{\mathrm{poly}}}
\def \LT{{\rm LT}}
\def \LM{{\rm LM}}
\def \LC{{\rm LC}}
\def \lcm{{\rm lcm}}
\def\ri{\rangle}
\def\li{\langle}
\def\LM{{\mathrm{LM}}}
\def\LC{{\mathrm{LC}}}
\def\LT{{\mathrm{LT}}}
\def\L{{\mathcal{L}}}
\def\nf{{\rm{NF}}}
\def\poly{{\rm{poly}}}
\def\anc{{\rm{anc}}}
\begin{document}
\begin{frontmatter}

\title{A Variant of Gerdt's Algorithm for Computing Involutive Bases}


\author{Vladimir P. Gerdt}
\address{Laboratory of Information Technologies\\Joint Institute for Nuclear Research, 141980 Dubna, Russia}
\ead{Gerdt@jinr.ru}
\ead[url]{http://compalg.jinr.ru/CAGroup/Gerdt/}

\author{Amir Hashemi}
\address{Department of Mathematical Sciences, \\Isfahan University of Technology, Isfahan, 84156-83111, Iran}
\ead{Amir.Hashemi@cc.iut.ac.ir}
\ead[url]{www.amirhashemi.iut.ac.ir}

\author{Benyamin M.-Alizadeh}
\address{Department of Mathematical Sciences, \\Isfahan University of Technology, Isfahan, 84156-83111, Iran}
\ead{B.Alizadeh@math.iut.ac.ir}

\begin{abstract}
In \cite{gerdtnew}, the first author presented an efficient algorithm for computing involutive (and reduced Gr\"obner) bases. In this paper, we consider a modification of this algorithm which simplifies matters to understand it and to implement. We prove correctness and termination of the modified algorithm and also correctness of the used criteria. The proposed algorithm has been implemented in {\tt Maple}. We present experimental comparison, via some examples, of performance of the modified algorithm with its original form described in \cite{gerdtnew} and has been implemented in {\tt Maple} too. In doing so, we have taken care to provide uniform implementation details for the both algorithms.
\end{abstract}

\begin{keyword}
Gr\"obner bases, Buchberger's criteria, Involutive bases, Gerdt's algorithm.
\end{keyword}

\end{frontmatter}

\section{Introduction}

The most important algorithmic object in computational algebraic geometry is {\em Gr\"{o}bner basis}. The notion of \Gr basis was introduced and an algorithm of its construction was designed in 1965 by Buchberger in his Ph.D. thesis \cite{Bruno1}. Later one \cite{Bruno2}, he discovered two criteria for detecting some unnecessary, and thus useless, reductions that made the Gr\"{o}bner bases method a practical tool to solve a wide class of problems in polynomial ideals theory and in many other research areas of science and engineering~\cite{Bruno3}.

In the 20s of the last century, French mathematician Janet \cite{janet} developed a constructive approach to analysis of certain systems of partial differential equations based on their completion to involution (cf.  \cite{seiler}). The Janet approach was generalized to arbitrary polynomially nonlinear systems by American mathematician Thomas \cite{thomas}. Based on the related methods described in the book by Pommaret~\cite{pommaret}, Zharkov and Blinkov \cite{zharkov} introduced the notion of {\em involutive bases} in commutative algebra. The particular form of an involutive basis they used is nowadays called {\em Pommaret basis} \cite{gerdt0,seiler}.

 Gerdt and Blinkov \cite{gerdt0} have proposed a more general concept of involutive bases for polynomial ideals and designed algorithmic methods for their construction. The underlying idea of the involutive approach is to translate the methods originating from Janet's results into polynomial ideals theory in order to construct involutive bases by combining algorithmic ideas of the theory of \Gr bases and those in the theory of involutive differential systems. In doing so, Gerdt and Blinkov \cite{gerdt0} introduced the concept of {\em involutive division}. Moreover, they have applied the involutive form of Buchberger's criteria. This led to a strong computational tool which is a serious alternative to the conventional Buchberger algorithm (note that any involutive basis is also a Gr\"obner basis). Apel and Hemmecke in \cite{detecting} proposed two more criteria for detecting unnecessary reductions in involutive basis computations (see also \cite{gerdt2}) which, in the aggregate with the criteria by Gerdt and Blinkov \cite{gerdt0}, are equivalent to Buchberger's criteria. The first author in \cite{gerdtnew} described an efficient algorithm to compute involutive and Gr\"obner bases using all these criteria. We refer to Seiler's book \cite{seiler} for a comprehensive study and application of involution to commutative algebra and geometric theory of partial differential equations.

In this paper, we propose a variant of Gerdt's algorithm \cite{gerdtnew} for constructing involutive bases. This algorithm seems to be simpler both to understand and to implement. We prove correctness for the new version of algorithm which includes the criteria and its termination. We have  implemented this algorithm in {\tt Maple} and we compare here its performance with our implementation of the original Gerdt's algorithm via some examples.

The structure of the paper is as follows. Section \ref{sec:1} contains the basic definitions and notations related to theory of involutive bases, and a short description of the algorithm for their computing proposed in its initial form \cite{gerdt0}. In Section \ref{gerdt}, we briefly present Gerdt's algorithm from \cite{gerdtnew} to compute involutive bases. Section \ref{sec:3} is devoted to description of the modified algorithm together with a proof of its correction and termination. In Section \ref{Exp}, we compare, by using some benchmarking examples, performance of the last algorithm with our implementation of Gerdt's algorithm done also in {\tt Maple}.

\section{Preliminaries}
\label{sec:1}
In this section, we recall some basic definitions and notations from the theory of involutive bases which are used in the paper, and briefly describe the initial algorithm proposed in \cite{gerdt0} for computing such bases.

Let $K$ be a field and $R=K[x_1,\ldots,x_n]$ the polynomial ring in the variables $x_1,\ldots,x_n$ over $K$. Below, we denote a monomial $x_1^{\alpha_1}\cdots x_n^{\alpha_n}\in R$ by ${\bf x}^\alpha$ where $\alpha=(\alpha_1,\ldots,\alpha_n) \in \mathbb{N}^{n}$ is a sequence of non-negative integers. A {\em monomial ordering} on $R$ used in theory of \Gr and involutive bases is a total order $\prec$ on the set of all monomials satisfying the following two properties:

\begin{itemize}
  \item  It is multiplicative; i.e., ${\bf x}^\alpha \prec {\bf x}^\beta$ implies ${\bf x}^{\alpha+\gamma} \prec {\bf x}^{\beta+\gamma}$ for all $\alpha,\beta,\gamma \in \mathbb{N}^{n}$,
\item The constant monomial is the smallest; i.e., $1\prec {\bf x}^\alpha$ for all $\alpha \in  \mathbb{N}^{n}$.
\end{itemize}

A typical  example of a such monomial ordering is the pure lexicographical ordering, denoted by $\prec_{lex}$. For two monomials ${\bf x}^\alpha,{\bf x}^\beta\in R$ ${\bf x}^\alpha \prec_{lex} {\bf x}^\beta$ if the left most nonzero entry of $\beta-\alpha$ is positive.

Let  $I=\langle f_1,\ldots ,f_k\rangle$ be the ideal of $R$ generated by the polynomials $f_1,\ldots ,f_k\in R$. Also let $f\in R$ and $\prec$ be a monomial ordering on $R$. The {\em leading monomial} of $f$ is the greatest monomial (with respect to $\prec$) appearing in $f$, and we denote it by $\LM(f)$. If $F\subset R$ is a set of polynomials, we denote by $\LM(F)$ the set $\{\LM(f) \ \mid \ f\in F\}$. The {\em leading coefficient} of $f$, denoted by $\LC(f)$, is the coefficient of $\LM(f)$. The {\em leading term} of $f$ is $\LT(f)=\LC(f)\LM(f)$. The {\em leading term ideal} of $I$ is defined to be
$$\LT(I)=\langle \LT(f)\ \mid \ f \in I\rangle.$$
A finite set $G=\{g_1,\ldots ,g_k\}\subset I$ is called a {\em Gr\"obner basis} of $I$ w.r.t. $\prec$ if $\LT(I)=\langle \LT(g_1),\ldots,\LT(g_k) \rangle$. For more details, we refer to \cite{Becker}, pages 213--214.

We recall below the definition of a special kind of involutive bases, namely, {\em Janet basis}. For this purpose, we describe first the notion of an {\em involutive division} \cite{gerdt0} which is a restricted monomial division \cite{gerdtnew} that, together with a monomial ordering, determines the properties and the structure of an involutive basis. This makes the main difference between involutive bases and Gr\"obner bases. The idea is to partition the variables into two subsets of multiplicative and non-multiplicative variables, and only the multiplicative variables can be used in the divisibility relation.

\begin{defn}(\cite{gerdt0})
An involutive division $\L$ on the set of monomials of $R$ is given, if  for any finite set $U$ of monomials and any $u \in U$, the set of variables is partitioned into the subsets of multiplicative $M_{\L}(u,U)$ and non-multiplicative $NM_{\L}(u,U)$ variables, and the following three conditions hold:
\begin{itemize}
\item $u,v\in U,\ u{\L}(u,U) \cap v{\L}(v,U) \ne \emptyset \Longrightarrow u\in v{\L}(v,U)$ or $v \in u{\L}(u,U)$
\item $v\in U,\ v \in u\L(u,U) \Longrightarrow \L(v,U) \subset \L(u,U)$
\item $u \in V$ and $V \subset U \Longrightarrow \L(u,U) \subset \L(u,V)$
\end{itemize}
where $\L(u,U)$ denotes  the monoid generated by the variables in $M_\L(u,U)$. If $v \in u\L(u,U)$ then  $u$ is called $\L-${\em (involutive) divisor} of $v$ and the involutive divisibility relation is denoted by $u |_{\L} v$.
If $v$ has no involutive divisors in a set $U$, then it is called ${\L}-$irreducible modulo $U$.
\end{defn}

There are involutive divisions based on the classical partitions of variables suggested by  Janet \cite{janet} and Thomas \cite{thomas}. In this paper, we are interested only in Janet division \cite{gerdt0}.
\begin{exmp}(Janet division)
Let $U$ be a finite set of monomials and $u \in U$. The variable $x_1$ is multiplicative for $u$, if $u$ has the maximum degree of $x_1$ in $U$. For $i>1$, the variable $x_i$ is multiplicative for $u$ if it has the maximum degree of $x_i$ in the set $\{v\in U \ | \  \deg_j(v)=\deg_j(u), 1\leq j <i\}$, where $\deg_j(v)$ denotes the degree of $x_j$ in a monomial $v$.
\end{exmp}

\begin{prop} {\em (\cite{gerdt0})}
\label{jan}
 Janet division satisfies the conditions in Definition 1 and is noetherian.
\end{prop}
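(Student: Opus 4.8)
\emph{Sketch of the intended proof.} The plan is to check the three conditions of Definition 1 in turn and then prove Noetherianity, the whole argument resting on one combinatorial observation which I would set up first. Restate Janet's rule degree-wise: for $u\in U$ and $1\le i\le n$ put $S_i(u,U)=\{t\in U \mid \deg_j(t)=\deg_j(u)\text{ for all }j<i\}$ (so $S_1(u,U)=U$); then $x_i\in M_\L(u,U)$ exactly when $\deg_i(u)=\max\{\deg_i(t)\mid t\in S_i(u,U)\}$. The fact I would isolate as a lemma is the \emph{maximality principle}: if $x_i\in M_\L(u,U)$ then no $t\in U$ has $\deg_j(t)=\deg_j(u)$ for all $j<i$ together with $\deg_i(t)>\deg_i(u)$.

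For the first condition I would actually prove the stronger statement that the Janet cones $u\L(u,U)$ of distinct elements of $U$ are pairwise disjoint, which makes the required implication vacuous (the case $u=v$ being trivial). Suppose $w\in u\L(u,U)\cap v\L(v,U)$ with $u\ne v$; write $w=u\cdot p=v\cdot q$ with $p$ a monomial in $M_\L(u,U)$ and $q$ a monomial in $M_\L(v,U)$. Let $i$ be the least index with $\deg_i(u)\ne\deg_i(v)$, say $\deg_i(u)<\deg_i(v)$. From $\deg_i(w)\ge\deg_i(v)>\deg_i(u)$ the variable $x_i$ must divide $p$, so $x_i\in M_\L(u,U)$; but $v\in U$ shares with $u$ all degrees below $i$ and has a strictly larger $i$-th degree, contradicting the maximality principle. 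The second condition succumbs to the same idea: I would show $u\L(u,U)\cap U=\{u\}$, after which $\L(v,U)\subset\L(u,U)$ is immediate. If $v=u\cdot m\in U$ with $m\ne 1$ a monomial in $M_\L(u,U)$, pick the smallest variable $x_k$ dividing $m$; then $x_k\in M_\L(u,U)$ while $v\in U$ shares all degrees below $k$ with $u$ and has a larger $k$-th degree --- impossible.

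For the third condition, take $u\in V\subset U$ and fix $i$: the competitor set can only shrink, $S_i(u,V)\subset S_i(u,U)$, and still contains $u$, so a degree maximal over the larger set is maximal over the smaller, giving $M_\L(u,U)\subset M_\L(u,V)$, i.e. $\L(u,U)\subset\L(u,V)$. Noetherianity is the part with real content, and here I would study the Janet completion: from a finite monomial set $U_0$, while some $u$ in the current set $U$ has a non-multiplicative variable $x_i$ with $x_i u$ lacking an $\L$-divisor in $U$, adjoin $x_i u$ to $U$. I would prove this halts via the invariant that every monomial $m$ it ever produces satisfies $\deg_j(m)\le d_j:=\max_{t\in U_0}\deg_j(t)$ for all $j$: adjoining $x_i u$ leaves all degrees but the $i$-th fixed and raises that one by one, and non-multiplicativity of $x_i$ yields $u'$ in the current set with $\deg_i(u')>\deg_i(u)$, hence by induction $\deg_i(x_i u)\le\deg_i(u')\le d_i$. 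So all produced monomials sit in the single finite box $\prod_j\{0,\ldots,d_j\}$; a newly adjoined one has no $\L$-divisor in the current set --- in particular it is not already present, since every monomial $\L$-divides itself --- so the set grows strictly each step and the process terminates. The resulting finite set generates $\langle U_0\rangle$ and, all its non-multiplicative prolongations being $\L$-reducible, is a Janet basis; thus every finitely generated monomial ideal has a finite Janet basis.

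The routine part is the trio of axioms, which collapse to the one-line maximality principle once the degree-wise reformulation is in place. The main obstacle is Noetherianity: the coordinate bound $\deg_j(m)\le d_j$ is the crux --- it is precisely what distinguishes Janet division from close relatives such as Pommaret division, whose completions need not be finite --- and I would expect the subtlest point to write out carefully to be the concluding ``local-to-global'' step, namely that a monomial set all of whose non-multiplicative prolongations involutively reduce is genuinely a Janet basis (most cleanly derived from continuity of Janet division, which can be verified along the same lines).
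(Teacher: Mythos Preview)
The paper does not prove this proposition at all; it simply cites \cite{gerdt0} (Gerdt--Blinkov 1998) for the result. Your sketch is a correct reconstruction of the standard argument found there: the degree-wise reformulation and ``maximality principle'' dispatch the three axioms (indeed, your stronger observations that distinct Janet cones are pairwise disjoint and that $u\L(u,U)\cap U=\{u\}$ are well known and make axioms~1 and~2 trivial), and the Noetherianity argument via the coordinate-wise degree bound $\deg_j(m)\le d_j$ on the Janet completion is exactly the classical one.

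Since the present paper offers no proof to compare against, there is nothing further to say about alignment of approaches; your write-up would stand in perfectly well for the bare citation.
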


{\em Throughout this paper ${\L}$ is assumed to be the Janet division}. Now, we define an involutive basis.

\begin{defn} (\cite{gerdtnew})
Let $I\subset R$ be an ideal and $\prec$ a monomial ordering on $R$. Let ${\L}$ be an involutive division. A finite set $G\subset I$ is an (${{\L}}$-)involutive basis for $I$ if for all $f\in I$ there exists $g\in G$ such that $\LM(g) |_{\L} \LM(f)$.
\end{defn}

From this definition and from that for \Gr basis~\cite{Bruno1,Becker} it follows that an involutive basis for an ideal is its Gr\"obner basis, but the converse is not always true.

\begin{rem}
By using an involutive division in the conventional division algorithm for polynomial rings, we obtain an involutive division algorithm. If $F$ is a finite polynomial set and ${\L}$ is an involutive division, then we use $\nf_{\L}(f,F)$ to denote the remainder of $f$ on involutive division by $F$.
\end{rem}

The following theorem provides an algorithmic characterization of involutive basis for a given ideal which is an involutive analogue of the Buchberger characterization of \Gr basis.
\begin{thm} {\em (\cite{gerdt0})}
\label{blin}
Let $I\subset R$ be an ideal, $\prec$ a monomial ordering on $R$, and ${\L}$ an involutive division. Then a finite subset $G \subset I$ is an involutive basis for $I$ if for each $f\in G$ and each $x\in NM_{\L}(\LM(f),\LM(G))$, we have $\nf_{\L}(xf,G)=0$.
\end{thm}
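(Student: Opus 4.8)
The plan is to prove Theorem \ref{blin} in the standard way for involutive completeness criteria, namely by showing that the local condition ($\nf_{\L}(xf,G)=0$ for all $f\in G$ and all non-multiplicative $x$) forces the set $\LM(G)$ to be an involutive set, i.e.\ that the cone $\bigcup_{g\in G}\LM(g)\L(\LM(g),\LM(G))$ equals the monoid ideal $\langle\LM(G)\rangle$ as a set of monomials. Once this is established, one lifts the monomial statement to polynomials: given $f\in I$, write $f=\sum h_i g_i$, take $\LM(f)$, observe it lies in $\langle\LM(G)\rangle$ (this already holds since $G$ generates $I$ as an ideal — note that the hypothesis implies in particular that every $xg$ reduces to zero, so $G$ is closed under the completion process and in fact is a Gr\"obner basis by Buchberger's criterion), hence by the monomial claim some $\LM(g)$ divides $\LM(f)$ \emph{involutively}, which is exactly the defining property of an involutive basis in Definition 3.

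First I would fix notation: let $U=\LM(G)$. The key combinatorial lemma is: if for every $u=\LM(g)$ and every $x\in NM_{\L}(u,U)$ the product $xu$ is involutively reducible modulo $G$ (which is what $\nf_{\L}(xg,G)=0$ gives us at the level of leading monomials, after checking that the involutive normal form computation never raises the leading monomial), then the involutive cone of $U$ is the full monomial ideal generated by $U$. The proof of this lemma goes by contradiction using the noetherianity of Janet division (Proposition \ref{jan}): suppose $w\in\langle U\rangle$ is \emph{not} in the involutive cone, and among all such $w$ choose one minimal with respect to $\prec$ (or with respect to divisibility — noetherianity guarantees a minimal one exists). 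Since $w\in\langle U\rangle$, there is $u\in U$ with $u\mid w$; pick such a $u$, say $w=u\cdot m$ for a monomial $m$. If $m\in\L(u,U)$ we are done, so $m$ must contain some non-multiplicative variable $x\in NM_{\L}(u,U)$; write $m=x\cdot m'$. Then $xu$ is in the involutive cone by hypothesis, so $xu=u'\cdot m''$ with $u'\in U$ and $m''\in\L(u',U)$, whence $w=u'\cdot m''\cdot m'$. Now I would use the second axiom of involutive division together with a standard degree/ordering argument to show that either $w$ itself is already in the involutive cone of $u'$ (contradiction), or one produces a strictly smaller counterexample $w'$ (e.g.\ $w'=u'm''$ or an intermediate monomial), contradicting minimality.

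The main obstacle — and the place where the specific axioms of involutive division and the Janet property really get used — is precisely this last descent step: controlling how the non-multiplicative variable $x$ interacts with the multiplicative monoid $\L(u',U)$ of the new divisor $u'$, and ensuring the induction parameter genuinely decreases. The three axioms are tailored for exactly this: axiom (i) resolves overlaps of involutive cones, axiom (ii) propagates multiplicativity downward along involutive divisibility, and axiom (iii) handles restriction to subsets; noetherianity of Janet division supplies the well-foundedness needed to terminate the descent. Once the monomial lemma is in hand, the polynomial-level argument is routine: for $f\in I$ we have $\LM(f)\in\LT(I)=\langle\LM(G)\rangle$ (the hypothesis makes $G$ a Gr\"obner basis via Buchberger's criterion, since each $xg\to_G 0$ covers all relevant S-polynomial type obstructions), so by the lemma $\LM(f)$ is involutively divisible by some $\LM(g)$, which is the definition of $G$ being an involutive basis.

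For completeness I would also remark why the involutive normal form of $xf$ has leading monomial bounded by $x\LM(f)$ and why $\nf_{\L}(xf,G)=0$ therefore does imply $x\LM(f)$ lies in the involutive cone: each involutive reduction step replaces the current leading monomial by something $\preceq$ it and involutively divisible by some $\LM(g)$, so reaching $0$ means the chain of such steps starts from $x\LM(f)$, forcing $x\LM(f)$ into the cone. This is the bridge between the polynomial hypothesis and the purely monomial lemma, and I would state it as a short preliminary observation before invoking the lemma.
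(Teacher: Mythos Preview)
The paper does not prove Theorem~\ref{blin}; it is quoted verbatim as a result of \cite{gerdt0} and immediately used as a black box to justify the algorithm {\sc InvBasis}. There is therefore no ``paper's own proof'' to compare your proposal against. Your outline is broadly the standard argument from the original source (monomial involutive cone lemma plus noetherianity descent), so in spirit you are reproducing \cite{gerdt0}.

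That said, your sketch has a real gap you should be aware of. You want, for arbitrary $f\in I$, to conclude $\LM(f)\in\langle\LM(G)\rangle$, and you justify this by saying ``the hypothesis makes $G$ a Gr\"obner basis via Buchberger's criterion, since each $xg\to_G 0$ covers all relevant $S$-polynomial type obstructions.'' This is precisely the non-trivial content of the theorem and cannot be invoked as a one-liner: the $S$-polynomials $S(f,g)$ for $f,g\in G$ are \emph{not} non-multiplicative prolongations, and showing that the prolongation condition forces all $S$-polynomials to reduce to zero is essentially equivalent to what you are trying to prove. The standard route in \cite{gerdt0} avoids this circularity: one works directly with an arbitrary element of $\langle G\rangle$ (so one must also assume, as is implicit throughout, that $G$ generates $I$, not merely $G\subset I$) and shows by a well-founded induction that its $\L$-normal form modulo $G$ is zero, using local involutivity and continuity/constructivity of the division to push non-multiplicative variables step by step. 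The monomial cone lemma you state is the right engine, but you should drive it from ``$f\in\langle G\rangle$'' rather than from an unproved Gr\"obner-basis claim.
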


Based on this theorem, one can design an algorithm to compute involutive bases. We recall here the {\sc InvBasis} algorithm from \cite{gerdt0} (see also \cite{gerdtnew}) which computes a {\em minimal} involutive basis for an ideal. An involutive basis $G$ is called minimal if for any other involutive basis $\tilde{G}$ the inclusion $\LM(G)\subseteq \LM(\tilde{G})$ holds.  A minimal involutive basis exists and being monic and {\em involutively autoreduced}, i.e. satisfying
\[
(\,\forall g\in G\,)\ \  [\,g=\nf_{{\L}}(g,G\setminus\{g\})\,]
\]
is {\em unique} for a given ideal, a monomial ordering and a {\em constructive} involutive division (see \cite{gerdt0} for the definition of constructivity).

\begin{algorithm}[H]
\caption{{\sc InvBasis}}
\begin{algorithmic}
 \INPUT $F$, a set of polynomials; ${\L}$, an involutive division;, $\prec$, a monomial ordering.
 \OUTPUT a minimal involutive basis for $\li F \ri$ w.r.t ${\L}$ and $\prec$
 \STATE Select $f \in F$ with no proper divisor of $\LM(f)$ in $\LM(F)$
\STATE $G:=\{f\}$;
\STATE $Q:=F \setminus G$;
\WHILE {$Q \ne \emptyset$}
     \STATE Select and remove $p \in Q$ with no proper divisor of $\LM(p)$ in $\LM(Q)$;
     \STATE $h:=\nf_{\L}(p,G)$;
     \IF {$h\ne 0$}
           \FOR {$g\in G$ which $\LM(g)$ is properly divisible by $\LM(h)$}
                  \STATE {$Q:=Q\cup \{g\}$; }
		  \STATE $G:=G \setminus \{g\}$;
           \ENDFOR
           \STATE {$G:=G\cup \{h\}$;}
           \STATE {$Q:=Q\cup \{xg \ |\ g\in G, x\in NM_{\L}(\LM(g),\LM(G))\}$;}
     \ENDIF
\ENDWHILE
\STATE {\bf Return} ($G$)
\end{algorithmic}
\end{algorithm}

Here, as it has been mentioned in \cite{gerdtnew}, in comparison to the algorithm in \cite{gerdt0} another selection strategy is used. By this strategy, a polynomial whose leading monomial has no proper divisor is chosen. However, this algorithm is not efficient in practice, it processes the repeated prolongations and does not use any criterion to avoid the unnecessary reductions. For this purpose, we consider in the next section, the Gerdt' algorithm from \cite{gerdtnew} as an improvement of {\sc InvBasis}.

\section{Gerdt's algorithm}

\label{gerdt}
The Gerdt's algorithm \cite{gerdtnew} improves the above algorithm {\sc InvBasis}. It uses the involutive form of Buchberger's criteria to avoid unnecessary reductions and avoids the repeated processing of non-multiplicative prolongations. In order to explain the structure of this algorithm, we recall some more definitions and notations.

\begin{defn} (\cite{gerdtnew})
An {\em ancestor} of a polynomial $f \in F \subset R \setminus \{0\}$, denoted by $\anc(f)$, is a polynomial $g \in  F$
of the smallest $\deg(\LM(g))$ among those satisfying $\LM(f) = u\LM(g)$ where $u$ is either the unit monomial or a power product of non-multiplicative variables for $\LM(g)$ and such that $\nf_{\L}(f-ug,F\setminus\{f\})=0$ if $f\neq ug$.
\end{defn}

This additional information for an element in a polynomial set is useful to avoid unnecessary reductions specially by applying the adapted Buchberger's criteria (see below).

\begin{prop}{\em (\cite{detecting})}
\label{crit}
Let $I\subset R$ be an ideal and $G\subset I$ be a finite set. Let also $\prec$ be a monomial ordering on $R$ and ${\L}$ an involutive division. Then $G$ is an ${\L}-$involutive basis for $I$ if for all $f\in G$ one of the following conditions holds (we use "$\sqsubset$" to denote proper conventional division):
\begin{enumerate}
\item for all $x\in NM_{\L}(\LM(f),\LM(G))$ the equality $ \nf_{\L}(xf,G)=0$ holds.
\item There exists $g \in G$ with $\LM(g) |_{\L} \LM(f)$ satisfying one of the following:
\begin{description}
\item[$(C_1)$] $\LM(\anc(f))\LM(\anc(g))=\LM(f)$
\item[$(C_2)$] $\lcm(\LM(\anc(f)),\LM(\anc(g))) \sqsubset \LM(f)$
\item[$(C_3)$] There exists $t\in G$ such that:
     \begin{itemize}
         \item $\lcm(\LM(t),\LM(f)) \sqsubset \lcm(\LM(\anc(f)),\LM(\anc(g)))$
         \item $\lcm(\LM(t),\LM(g)) \sqsubset \lcm(\LM(\anc(f)),\LM(\anc(g)))$
     \end{itemize}
\item[$(C_4)$] There exists $t\in G$ computed before $\anc(f)$ and $y\in NM_{\L}(\LM(t),\LM(G))$ s.t.
      \begin{itemize}
          \item $y\LM(t)=\LM(f)$
          \item $\lcm(\LM(\anc(f)),\LM(\anc(t))) \sqsubset \LM(f)$.
      \end{itemize}
\end{description}
\end{enumerate}
\end{prop}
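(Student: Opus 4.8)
\noindent\emph{Proof strategy.} The plan is to reduce the statement to the algorithmic characterization of Theorem \ref{blin}: it then suffices to prove that $\nf_{\L}(xf,G)=0$ for every $f\in G$ and every $x\in NM_{\L}(\LM(f),\LM(G))$. I would phrase this through \emph{involutive standard representations}: $h\in\li G\ri$ has one if $h=\sum_i P_i\,g_i$ with $g_i\in G$, each $P_i$ a polynomial only in the multiplicative variables $M_{\L}(\LM(g_i),\LM(G))$, and $\LM(P_ig_i)\preceq\LM(h)$ for all $i$. A preliminary lemma, proved directly from the three axioms of Definition 1 (the third one, $u\in V\subseteq U\Rightarrow\L(u,U)\subseteq\L(u,V)$, being the one that does the real work), states that $\nf_{\L}(h,G)=0$ if and only if $h$ admits an involutive standard representation. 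The whole problem thus becomes: exhibit an involutive standard representation of every non-multiplicative prolongation $xf$.

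Since $G$ is finite there are only finitely many such prolongations, and I would induct on $x\LM(f)$ with respect to $\prec$. Fix $f$ and $x$. If $f$ satisfies condition~(1) there is nothing to prove. Otherwise $f$ satisfies condition~(2): there is $g\in G$ with $\LM(g)\,|_{\L}\,\LM(f)$, say $\LM(f)=w\,\LM(g)$ with $w\in\L(\LM(g),\LM(G))$, and one of $(C_1)$--$(C_4)$ holds. The relation $\LM(g)\,|_{\L}\,\LM(f)$ lets one rewrite $\nf_{\L}(xf,G)$, after cancellation of leading terms, in terms of an $S$-polynomial-type combination built from the prolongation chains $\anc(f)\rightsquigarrow f$ and $\anc(g)\rightsquigarrow g$; everything there has leading monomial $\prec x\LM(f)$, so it remains to give such a combination an involutive standard representation and then appeal to the inductive hypothesis.

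The four criteria fall into two familiar patterns. Criterion $(C_1)$ is the involutive form of Buchberger's product (coprimality) criterion --- $\LM(f)=\LM(\anc(f))\LM(\anc(g))$ forces the $S$-polynomial to split at once --- while $(C_2)$ records that $\lcm(\LM(\anc(f)),\LM(\anc(g)))$ is a \emph{proper} divisor of $\LM(f)$, so the corresponding critical pair was already disposed of at an earlier stage. In both cases I would telescope using the defining property of an ancestor, $\LM(f)=u\,\LM(\anc(f))$ with $u$ a power product of non-multiplicative variables and $\nf_{\L}(f-u\,\anc(f),G\setminus\{f\})=0$, down to prolongations with leading monomial $\prec x\LM(f)$. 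Criteria $(C_3)$ and $(C_4)$ are the involutive form of Buchberger's chain criterion: the auxiliary $t\in G$ and the two $\sqsubset$-conditions on least common multiples let one write the $S$-polynomial of $f$ and $g$ as a combination --- with cofactors in the correct multiplicative monoids --- of the $S$-polynomials of $(f,t)$ and $(g,t)$, each of strictly smaller leading monomial; for $(C_4)$ one additionally uses that $t$ was computed before $\anc(f)$ and that $y\LM(t)=\LM(f)$ with $y\in NM_{\L}(\LM(t),\LM(G))$, so that $yt$ is a prolongation of leading monomial $\LM(f)\prec x\LM(f)$ already handled. Assembling the pieces gives the required representation of $xf$.

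The main obstacle, and where I expect most of the work to lie, is that every cofactor produced along the way must be checked to lie in the appropriate multiplicative monoid $\L(\LM(g_i),\LM(G))$: involutive division is far more rigid than ordinary division --- an ordinary divisibility chain need not survive the restriction to multiplicative variables --- so this is precisely where the axioms of Definition 1 must be used with care rather than invoked formally. The subcase $(C_4)$ looks the most delicate, since one must reconcile the extrinsic ``computed before'' ordering it refers to with the $\prec$-ordering on leading monomials driving the induction; by contrast, once the ancestor-telescoping identity is set up, the product-criterion cases $(C_1)$ and $(C_2)$ should be comparatively routine.
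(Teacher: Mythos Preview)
The paper does not actually prove Proposition~\ref{crit}: it is quoted from Apel--Hemmecke \cite{detecting} with no argument supplied. The only related proof in the paper is the unnumbered Lemma in Section~\ref{sec:3}, which is a \emph{different} and much weaker statement: it treats only criteria $C_1$ and $C_2$, is phrased algorithmically (``$T$ the last computed basis during a computation''), and shows that a single element $f$ whose involutive divisor $q$ satisfies $C_1$ or $C_2$ already has $\nf_{\L}(f,T)=0$. That proof is short: for $C_2$ (and the non-coprime subcase of $C_1$) one writes $f-tq=s\bigl(u'\,\anc(f)-v'\,\anc(q)\bigr)$ with $s\ne 1$, observes that by the selection strategy the bracketed $S$-polynomial was processed earlier and hence already has a standard representation, and concludes; the coprime subcase of $C_1$ is Buchberger's product criterion. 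So there is no ``paper's own proof'' of the full Proposition to compare against --- your outline is attempting strictly more than the paper does.

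On your sketch itself: the reduction to Theorem~\ref{blin}, the equivalence with involutive standard representations, and the noetherian induction on leading monomials are the right scaffolding and match both the paper's Lemma and the Apel--Hemmecke argument. One point deserves tightening. Condition~(2) is a hypothesis on $f$ (namely $\LM(g)\,|_{\L}\,\LM(f)$ and $C_i$ for the pair $(f,g)$), whereas the conclusion you need concerns the prolongation $xf$. Your outline passes directly from ``$\LM(g)\,|_{\L}\,\LM(f)$'' to ``rewrite $\nf_{\L}(xf,G)$ after cancelling leading terms'', but $xw$ need not lie in $\L(\LM(g),\LM(G))$, so $g$ is not in general an $\L$-divisor of $x\LM(f)$; the honest route is first to extract from $C_1$--$C_4$ an involutive standard representation of $f$ itself modulo $G\setminus\{f\}$, and only then lift to $xf$ via the induction. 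This is precisely the ``cofactors in the correct multiplicative monoid'' obstacle you flag at the end, but it bites one step earlier than your write-up suggests. You are also right that $C_4$ is the delicate case: the ``computed before'' clause is extrinsic to $\prec$, and the paper sidesteps this entirely by invoking the algorithmic selection strategy and restricting its Lemma to $C_1$ and $C_2$ only.
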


Now we can present Gerdt's algorithm for computing involutive bases in which we associate to each polynomial $f$, the triple $p=\{f,g,V\}$ where $f=\poly(p)$ is the polynomial itself, $g=\anc(p)$ is its ancestor and $V=NM(p)$ is the list of non-multiplicative variables of $f$ which have been already processed in the algorithm.  If $P$ is a set of triples, we denote by $\poly(P)$ the set $\{\poly(p) \ \mid \ p\in P\}$. If no confusion arises, we may refer to a triple $p$ instead of $\poly(p)$, and vice versa.

\begin{algorithm}[H]
\caption{{\sc Gerdt}}
\begin{algorithmic}
 \INPUT  $F$, a set of polynomials; ${\L}$, an involutive division; $\prec$, a monomial ordering
 \OUTPUT a minimal involutive basis for $\li F \ri$ w.r.t ${\L}$ and $\prec$
\STATE Select $f \in F$ with no proper divisor of $\LM(f)$ in $\LM(F)$
\STATE $T:=\{\{f,f,\emptyset\}\}$;
\STATE $Q:=\{\{q,q,\emptyset\}\ |\ q\in F \setminus \{f\}\}$;
\STATE $Q:=${\sc HeadReduce}$(Q,T,{\L},\prec)$;
\WHILE {$Q \ne \emptyset$}
     \STATE Select and remove $p \in Q$ with no proper divisor of $\LM(\poly(p))$ in $\LM(\poly(Q))$;
     \IF {$\poly(p)=\anc(p)$}
           \FOR {$q\in T$ with $\LM(\poly(p))\sqsubset \LM(\poly(q))$}
                 \STATE $Q:=Q \cup \{q\}$;
		 \STATE $T:=T \setminus \{q\}$;
           \ENDFOR
     \ENDIF
      \STATE $h:=${\sc TailNormalForm}$(p,T,{\L},\prec)$
      \STATE $T:=T \cup \{\{h,\anc(p),NM(p)\}\}$;
      \FOR {$q\in T$ and $x\in NM_{\L}(\LM(\poly(q)),\LM(\poly(T))) \setminus NM(q)$}
           \STATE $Q:=Q \cup \{\{x\ \poly(q),\anc(q),\emptyset\}\}$;
           \STATE $NM(q):=NM(q) \cap NM_{\L}(\LM(\poly(q)),\LM(\poly(T))) \cup \{x\}$;
      \ENDFOR
      \STATE $Q:=${\sc HeadReduce}$(Q,T,{\L},\prec)$;
\ENDWHILE
\STATE {\bf Return} ($\poly(T)$)
\end{algorithmic}
\end{algorithm}

This algorithm includes three subalgorithms {\sc HeadReduce}, {\sc HeadNormalForm} and {\sc TailNormalForm} which we present below.

\begin{algorithm}[H]
\caption{{\sc HeadReduce}}
\begin{algorithmic}
 \INPUT $Q$ and $T$, sets of triples; ${\L}$, an involutive division; $\prec$, a monomial ordering
 \OUTPUT The $Q$ of triples whose polynomials are ${\L}$-head reduced modulo $T$
 \STATE $S:=Q$
\STATE $Q:=\emptyset$;
\WHILE {$S \ne \emptyset$}
     \STATE Select and remove $p \in S$
     \STATE $h:=${\sc HeadNormalForm}$(p,T,{\L})$;
     \IF {$h\ne 0$}
          \IF{$\LM(\poly(p))\ne \LM(h)$}
               \STATE $Q:=Q \cup \{\{h,h,\emptyset\}\};$
          \ELSE
                \STATE $Q:=Q \cup \{p\};$
          \ENDIF
     \ELSE
          \IF{$\LM(\poly(p))=\LM(\anc(p))$}
               \STATE $S:=S \setminus \{q\in S \ | \ \anc(q)=\poly(p)\}$;
          \ENDIF
      \ENDIF
\ENDWHILE
\STATE {\bf Return} ($Q$)
\end{algorithmic}
\end{algorithm}

In the below subalgorithm {\sc HeadNormalForm} the Boolean expression Criteria$(p,g)$ is true if at least one of the four criteria in Proposition \ref{crit} holds for $p$ and $g$.

\begin{algorithm}[H]
\caption{{\sc HeadNormalForm}}
\begin{algorithmic}
 \INPUT $T$, a set of triples; $p$, a triple; ${\L}$, an involutive division; $\prec$, a monomial ordering
 \OUTPUT ${\L}$-head normal form of $\poly(p)$ modulo $T$
 \STATE $h:=\poly(p)$;
\STATE $G:=\poly(T)$;
\IF{$\LM(h)$ is ${\L}$-irreducible modulo $G$}
     \STATE {\bf Return} $(h)$
\ELSE
     \STATE Select $g\in G$ with $\LM(\poly(g)) |_{\L} \LM(h)$;
          \IF{$\LM(h)\ne \LM(\anc(p))$}
               \IF{Criteria$(p,g)$}
                     \STATE {\bf Return} $(0)$
               \ENDIF
          \ELSE
               \WHILE {$h\ne 0$ and $\LM(h)$ is ${\L}$-reducible modulo $G$}
                     \STATE Select $g\in G$ with $\LM(g) |_{\L} \LM(h)$;
                     \STATE $h:=h-g\frac{\LT(h)}{\LT(g)}$;
               \ENDWHILE
           \ENDIF
\ENDIF
\STATE {\bf Return} ($h$)
\end{algorithmic}
\end{algorithm}

\begin{algorithm}[H]
\caption{{\sc TailNormalForm}}
\begin{algorithmic}
 \INPUT $T$, a set of triples; $p$, a triple, which is ${\L}$-head reduced modulo T;
 ${\L}$, an involutive division; $\prec$, a monomial ordering
 \OUTPUT ${\L}$-Normal form of $\poly(p)$ modulo $T$
 \STATE $h:=\poly(p)$;
\STATE $G:=\poly(T)$;
\WHILE {$h$ has a term $t$ which is  ${\L}-$reducible modulo $G$}
       \STATE Select $g\in G$ with $\LM(g) |_{\L} t$;
       \STATE $h:=h-g\frac{t}{\LT(g)}$;
\ENDWHILE
\STATE {\bf Return} ($h$)
\end{algorithmic}
\end{algorithm}

\section{Modified algorithm}
\label{sec:3}
In this section, we present the following variant of Gerdt's algorithm for computing involutive bases.

\begin{algorithm}[H]
\caption{{\sc VarGerdt}}
\begin{algorithmic}
\INPUT $F$, a set of polynomials; ${\L}$, an involutive division; $\prec$, a monomial ordering
\OUTPUT a minimal ${\L}-$involutive basis for $\li F \ri$
\STATE Select $f \in F$ with no proper divisor of $\LM(f)$ in $\LM(F)$
\STATE $T:=\{\{f,f,\emptyset\}\}$;
\STATE $Q:=\{\{q,q,\emptyset\}\ |\ q\in F \setminus \{f\}\}$;
\WHILE {$Q \ne \emptyset$}
     \STATE Select and remove $p \in Q$ with no proper divisor of $\LM(\poly(p))$ in $\LM(\poly(Q))$;
     \STATE $h:=${\sc NormalForm}$(p,T,{\L},\prec)$;
     \IF {$h = 0$ and $\LM(\poly(p))=\LM(\anc(p))$}
           \STATE $Q:=\{q\in Q \ |\ \anc(q)\ne \poly(p)\};$
     \ELSE \IF{$\LM(\poly(p))\ne \LM(h)$}
               \FOR {$q\in T$ with $\LM(\poly(h))\sqsubset \LM(\poly(q))$}
                    \STATE $Q:=Q \cup \{q\}$;
		    \STATE $T:=T \setminus \{q\}$;
               \ENDFOR
               \STATE $T:=T \cup \{\{h,h,\emptyset\}\};$
          \ELSE
               \STATE $T:=T \cup \{\{h,\anc(p),NM(p)\}\};$
      \ENDIF
      \FOR {$q\in T$ and $x\in NM_{\L}(\LM(\poly(q)),\LM(\poly(T))) \setminus NM(q)$}
           \STATE $Q:=Q \cup \{\{x\ \poly(q),\anc(q),\emptyset\}\}$;
           \STATE $NM(q):=NM(q) \cap NM_{\L}(\LM(\poly(q)),\LM(\poly(T))) \cup \{x\}$;
      \ENDFOR
       \ENDIF
\ENDWHILE
\STATE {\bf Return} ($\poly(T)$)
\end{algorithmic}
\end{algorithm}

\noindent
This variant seems to be simpler both to understand and to implement. We also present here a proof of its correctness and termination including the correctness proof for the used criteria.

The subalgorithm {\sc NormalForm} returns the ${\L}-$normal form of a polynomial w.r.t. a given set of polynomials (see below). Moreover, this  subalgorithm  detects some unnecessary reductions using the involutive form of Buchberger's criteria. It is worth noting that the first author and Yanovich \cite{yanovich} have shown that the use of $C_3$  and $C_4$ criteria (see Proposition \ref{crit}) may notably slow down the computation of involutive bases. That is why, we use only  $C_1$  and $C_2$ criteria. Below, we give a simple proof of correctness for these criteria.

\begin{lem}
  Let $I\subset R$ be an ideal, $\prec$ a monomial ordering on $R$ and ${\L}$ an involutive division. Let $T\subset I$ be the last computed basis during a computation of an involutive basis for $I$, and $f\in I$. Then $\nf_{\L}(f,T)=0$ if there exists $q \in T$ with $\LM(q) |_{\L} \LM(f)$  satisfying one of the following conditions:
 \vskip 0.1cm
\begin{enumerate}
\item[$(C_1)$] $\LM(\anc(f))\LM(\anc(q))=\LM(f)$
\item[$(C_2)$] $\lcm(\LM(\anc(f)),\LM(\anc(q))) \sqsubset \LM(f)$.
\end{enumerate}
\end{lem}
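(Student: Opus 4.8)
The plan is to argue by Noetherian induction on $\LM(f)$ with respect to $\prec$. Put $a:=\anc(f)$ and $b:=\anc(q)$; by the way ancestors are attached to the members of $T$ in {\sc VarGerdt}, both $a$ and $b$ lie in $T$, and from the definition of an ancestor one has $\LM(a)\mid\LM(f)$, while $\LM(b)\mid\LM(q)\mid\LM(f)$ follows from $\LM(q)\mid_{\L}\LM(f)$; hence $\lcm(\LM(a),\LM(b))\mid\LM(f)$ unconditionally. Since $\LM(q)\mid_{\L}\LM(f)$, the polynomial $f$ is $\L$-head reducible modulo $T$, so $f':=f-\frac{\LT(f)}{\LT(q)}q\in I$ has $\LM(f')\prec\LM(f)$ and $\nf_{\L}(f,T)=\nf_{\L}(f',T)$ (using that $T$ is $\L$-autoreduced at this stage, so involutive normal forms are well defined). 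Thus it suffices to write $f'$ as a $K$-combination of multiplicative multiples modulo $T$ of members of $T$ --- which contribute $0$ to the involutive normal form --- together with polynomials of leading monomial $\prec\LM(f)$ that are covered by the induction hypothesis.

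I would then split $(C_1)$ into two cases. If $\gcd(\LM(a),\LM(b))\neq 1$, then from $\LM(a)\LM(b)=\LM(f)$ one gets $\lcm(\LM(a),\LM(b))\sqsubset\LM(f)$, so $(C_2)$ holds and we are reduced to that case; hence under $(C_1)$ we may assume $\LM(a)$ and $\LM(b)$ coprime, so that $\lcm(\LM(a),\LM(b))=\LM(a)\LM(b)=\LM(f)$. In this situation one relates $f'$, modulo further involutive reductions by $T$, to the $S$-polynomial $\Spol(a,b)$, and invokes Buchberger's first (coprimality) criterion: $\Spol(a,b)$ rewrites to $0$ using only the reductors $a$ and $b$ through a telescoping sum whose intermediate leading monomials are all $\prec\LM(f)$. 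Under $(C_2)$ instead $\lcm(\LM(a),\LM(b))\sqsubset\LM(f)$, whence $\LM(\Spol(a,b))\prec\LM(f)$; because {\sc VarGerdt} selects prolongations in non-decreasing order of leading monomial and $a,b$ entered $T$ before $f$ is treated, every non-multiplicative prolongation of $a$ and of $b$ with leading monomial $\prec\LM(f)$ has already been $\L$-reduced, and assembling these reductions (Buchberger's chain/LCM criterion) gives $\nf_{\L}(\Spol(a,b),T)=0$. Either way, combining with the step $f\mapsto f'$ and the induction hypothesis yields $\nf_{\L}(f,T)=0$.

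The step I expect to be the real obstacle is turning the two \emph{ordinary} Buchberger criteria into genuinely \emph{involutive} statements. This requires, first, isolating and maintaining a loop invariant of {\sc VarGerdt} --- namely that when $f$ is processed the set $T$ is $\L$-head-autoreduced and all prolongations of members of $T$ with leading monomial $\prec\LM(f)$ have already been normalised --- and checking that it is preserved by each branch of the main {\tt while}-loop; and second, verifying that every reduction occurring in the rewriting of $\Spol(a,b)$ (and in the telescoping sums) is an involutive step modulo $T$, i.e. that the cofactors produced are multiplicative, for the Janet division, for the elements of $T$ that reduce them. Once this is in place, the coprime case of $(C_1)$ is precisely the product criterion, $(C_2)$ is precisely the chain criterion, and the Noetherian induction on $\prec$ completes the proof.
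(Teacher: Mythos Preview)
Your approach is essentially the same as the paper's. The paper also splits $(C_1)$ into the coprime case (handled by Buchberger's first criterion) and the non-coprime case (which collapses to $(C_2)$), and for $(C_2)$ it writes $f-\tfrac{\LT(f)}{\LT(q)}q = s\,(u'\cdot\anc(f)-v'\cdot\anc(q))$ with $s\neq 1$, then appeals to the selection strategy to say the bracketed expression ``has been computed before $f$'' and hence already admits a standard representation with respect to $T$. Your explicit Noetherian induction and your passage through $\Spol(a,b)$ repackage exactly this argument; the paper's ``standard representation / computed before'' plays the role of your induction hypothesis together with the loop invariant you isolate.

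Two small remarks. First, the selection rule in {\sc VarGerdt} is by \emph{divisibility} (``no proper divisor of $\LM(\poly(p))$ in $\LM(\poly(Q))$''), not by $\prec$; so your invariant should read ``every prolongation with leading monomial a proper divisor of $\LM(f)$ has already been normalised'' rather than ``$\prec\LM(f)$''. This is exactly what the paper uses, and it suffices because in the $(C_2)$ case $\lcm(\LM(a),\LM(b))$ genuinely divides $\LM(f)$ properly. Second, the paper writes $f=u\cdot\anc(f)$ and $q=v\cdot\anc(q)$ as literal equalities; you are right to be more cautious and to pass ``modulo further involutive reductions by $T$'', since in the algorithm prolongations may have been tail-reduced before they reach $Q$. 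The step you flag as ``the real obstacle'' --- that the ordinary Buchberger rewriting becomes an involutive reduction via the partial involutivity of $T$ at this stage --- is glossed over in the paper as well: it simply asserts that the smaller difference ``has a standard representation w.r.t.\ $T$'' and does not spell out the loop invariant.
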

\begin{proof}
  Suppose that $(f,q)$ satisfies $C_1$. Then, for $\lcm(\LM(\anc(f)),\LM(\anc(q)))$ two cases are possible: If it is a proper divisor of $\LM(\anc(f))\LM(\anc(q))=\LM(f)$, there exists a monomial $s\ne 1$ such that $\LM(f)=s\lcm(\LM(\anc(f)),\LM(\anc(q)))$. Let $f=u\cdot\anc(f)$, $q=v\cdot \anc(q)$ and $\LT(f)=t\cdot \LT(q)$ for some monomials $u$ and $v$ and some term $t$. Thus, we can write $f-tq=u\cdot \anc(f)-tv\cdot \anc(q)=s(u'\cdot \anc(f)-v'\cdot \anc(q))$ where $u=su'$ and $vt=sv'$ for some monomials $u',v'$. Since, $\lcm(\LM(\anc(f)),\LM(\anc(q)))$  is a proper divisor of $\LM(f)$, then $u'\cdot \anc(f)-v'\cdot \anc(q)$ has been computed before $f$ (see the selection strategy for  choosing polynomials in the algorithm), and therefore, it has a standard representation w.r.t. $T$. This implies that $f-tq$ has also a standard representation w.r.t. $T$, and $\nf_{\L}(f,T)=0$. As the second case, if $\lcm(\LM(\anc(f)),\LM(\anc(q)))$ is equal to $\LM(\anc(f))\LM(\anc(q))$, then by the Buchberger's first criterion, we can conclude that $u'\cdot \anc(f)-v'\cdot \anc(q)$ has a standard representation w.r.t. $T$, and this proves the assertion like the first case.

Now, if $(f,q)$ satisfies $C_2$, the proof is similar to that for the above first case.
\end{proof}

Just as above, for two polynomials $p$ and $q$, the Boolean expression Criteria($p,q$) is true if they satisfy at least one of the criteria $C_1$ or $C_2$, and false otherwise.

\begin{algorithm}[H]
\caption{{\sc NormalForm}}
\begin{algorithmic}
 \INPUT $p$, a triple; $T$, a set of triples; ${\L}$, an involutive division; $\prec$, a monomial ordering
 \OUTPUT ${\L}$-Normal form of $p$ modulo $T$
 \STATE $h:=\poly(p)$;
 \STATE $G:=\poly(T)$;
 \WHILE {$h$ has a term $t$ which is  ${\L}-$reducible modulo $G$}
       \STATE Select $g\in G$ with $\LM(g) |_{\L} t$;
       \IF {$t=\LT(\poly(p))$ and Criteria($h,g$)}
              \STATE {\bf Return} ($0$)
       \ELSE
              \STATE $h:=h-g\frac{t}{\LT(g)}$;
       \ENDIF
\ENDWHILE
\STATE {\bf Return} ($h$)
\end{algorithmic}
\end{algorithm}

\begin{thm}
  The {\sc VarGerdt} algorithm computes a minimal involutive basis for the ideal generated by the input polynomial set.
\end{thm}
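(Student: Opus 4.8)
The plan is to prove the theorem in three parts: termination, correctness (i.e.\ that the returned set $\poly(T)$ is an involutive basis for $\langle F\rangle$), and minimality. For termination, I would argue as in the proof of termination for {\sc InvBasis} and for Gerdt's algorithm in \cite{gerdt0,gerdtnew}: the leading monomials of the polynomials entering $T$ that are \emph{not} later removed generate a strictly increasing chain of monomial ideals, which stabilizes by Dickson's lemma (equivalently, by noetherianity of the Janet division, Proposition~\ref{jan}). One must check that the transfer of a triple $q$ from $T$ back to $Q$ (in the block where $\LM(\poly(h))\sqsubset\LM(\poly(q))$) cannot happen infinitely often; this follows because each such transfer is triggered by adding to $T$ a polynomial $h$ whose leading monomial properly divides $\LM(\poly(q))$, and the selection strategy (always picking $p\in Q$ with $\LM(\poly(p))$ having no proper divisor in $\LM(\poly(Q))$) together with the autoreduction of $T$ forces the process of replacements to be finite. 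The prolongations $x\,\poly(q)$ added to $Q$ have leading monomial $x\LM(\poly(q))$ strictly larger than $\LM(\poly(q))$, and each non-multiplicative variable of each element of $T$ is processed at most once (tracked by the sets $NM(q)$), so only finitely many prolongations are generated once $\LM(\poly(T))$ stabilizes; after that, every prolongation head-reduces to $0$ and the loop ends.

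For correctness, I would first observe that {\sc VarGerdt} maintains the invariant $\langle\poly(T)\cup\poly(Q)\rangle=\langle F\rangle$ and $\poly(T)\subset\langle F\rangle$: the initial step splits $F$, every new element of $T$ or $Q$ is obtained by involutive reduction modulo $T$ or is a polynomial multiple of an element of $T$, so it lies in the ideal; and moving triples between $T$ and $Q$ preserves the union. Hence on termination $Q=\emptyset$ gives $\langle\poly(T)\rangle=\langle F\rangle$. To see that $\poly(T)$ is then involutive, I would invoke Proposition~\ref{crit} (the Apel--Hemmecke/Gerdt--Blinkov criteria): for each $g\in G=\poly(T)$ and each non-multiplicative $x\in NM_{\L}(\LM(g),\LM(G))$, the prolongation $xg$ was at some point inserted into $Q$ as a triple with ancestor $\anc(g)$, and it was removed from $Q$ only because {\sc NormalForm} returned $0$ --- either by genuine involutive reduction to zero, or because Criteria$(h,g')$ fired for some $g'$, i.e.\ condition $C_1$ or $C_2$ of Proposition~\ref{crit} held. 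In the first case condition~(1) of Proposition~\ref{crit} is witnessed; in the second, condition~(2) is. The one gap to fill carefully is the branch where the prolongation head-reduces to an $h$ with $\LM(\poly(p))\ne\LM(h)$: then $h$ (with itself as ancestor) re-enters $Q$ and the argument must be applied to the new, smaller-leading-monomial element; this terminates by the same Dickson argument. Here is where I would lean on the Lemma proved just above: it guarantees that whenever Criteria$(h,g)$ returns true, $\nf_{\L}(\poly(p),T)$ is indeed $0$ \emph{with respect to the final $T$}, so that discarding the prolongation is justified and the conditions of Proposition~\ref{crit} really are met at the end.

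For minimality, I would show that $\poly(T)$ is monic (or can be made so) and involutively autoreduced, and that $\LM(\poly(T))$ is exactly the minimal involutive basis of the monomial ideal $\LM(\langle F\rangle)$ with respect to $\L$ and $\prec$; uniqueness of the minimal involutive basis for the Janet division (a constructive division) then finishes the argument. Autoreduction of heads is enforced by the replacement loop that removes from $T$ any $q$ whose leading monomial is properly divisible by the leading monomial of a newly added element, and autoreduction of tails is enforced because every element placed in $T$ is the output of {\sc NormalForm} modulo the current $T$; one must check that later additions to $T$ do not spoil the tail-reducedness of earlier elements, which is handled exactly as in \cite{gerdtnew}. \textbf{The main obstacle} I anticipate is the interaction between the criteria-based early exit of {\sc NormalForm} and the dynamic nature of $T$: a prolongation may be discarded via $C_1$ or $C_2$ \emph{before} $T$ has reached its final form, so one has to argue that the standard representation promised by the Lemma survives all subsequent modifications of $T$ (additions of new elements and removals of head-reducible ones). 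Resolving this requires the observation that the Lemma's hypothesis refers to ``the last computed basis,'' and that once an element's leading monomial is stable in $\LM(\poly(T))$, the relevant $\lcm$ and ancestor conditions $C_1$, $C_2$ depend only on leading monomials of ancestors, which are never enlarged --- so the criterion, once valid, stays valid. Tying this stability argument together with the termination proof is the crux of the whole verification.
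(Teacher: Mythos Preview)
Your proposal is correct and follows essentially the same skeleton as the paper's own proof: termination from noetherianity of the division (Proposition~\ref{jan}), correctness from the fact that every non-multiplicative prolongation is examined and either reduced to zero or discarded by a criterion whose soundness is guaranteed by the Lemma, so that Theorem~\ref{blin} applies, and minimality from the structure of the algorithm together with constructivity of~$\L$. The paper's proof is in fact far terser than yours: it does not spell out the ideal invariant, the Dickson-chain argument, or the stability of the criteria under subsequent growth of~$T$ that you flag as the ``main obstacle''; it simply asserts that every prolongation $xf$ is examined and hence has zero involutive normal form, then cites Theorem~\ref{blin}, Proposition~\ref{jan}, and constructivity in one line each. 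Your invocation of Proposition~\ref{crit} in place of Theorem~\ref{blin}+Lemma is an equivalent route.

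One point to correct in your minimality paragraph: you claim tail-autoreduction is enforced and ``handled exactly as in \cite{gerdtnew}.'' In fact the Remark immediately following the theorem states explicitly that neither {\sc VarGerdt} nor {\sc Gerdt} outputs an involutively tail-autoreduced basis; a separate post-processing pass is required. This does not damage your minimality argument, since minimality concerns only the set of leading monomials (for which the head-autoreduction loop in the algorithm suffices), but you should drop the tail-autoreduction claim.
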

\begin{proof}
  Let $F\subset R$ be a finite set of polynomials taken as the input of {\sc VarGerdt} algorithm. Let $\prec$ be a monomial ordering on $R$ and ${\L}$ be a constructive noetherian involutive division \cite{gerdt0}. Let $G$ be a a polynomial set that is the output of the algorithm for $F$. Since (by the structure of the algorithm) for each $f\in G$ and each $x\in NM_{\L}(\LM(f),\LM(G))$, we examine $xf$ (we either compute its ${\L}-$normal form w.r.t. $G$ or we discard it by the criteria), it follows that $\nf_{\L}(xf,G)=0$. Therefore, $G$ is an involutive basis for $\li F \ri$ by Theorem \ref{blin}. Moreover, this basis is minimal by the definition of this concept due to  the structure of the {\sc VarGerdt} algorithm and constructivity of ${\L}$.

The termination of {\sc VarGerdt} algorithm follows from the noetherianity of ${\L}$, see Proposition \ref{jan}.
\end{proof}
\begin{rem}
  The algorithm {\sc VarGerdt} as well as Gerdt's algorithm not necessarily outputs an involutively tail autoreduced basis. If the output is $G$, then, in accordance to the definition that follows Theorem 6 of Section 2, the tail autoreduction is provided by the command
\[
   (\,\forall g\in G\,)\ \ [\,g:=NF_{{\L}}(g,G\setminus \{g\})\,]\,.
\]
\end{rem}

\section{Experiments and results}
\label{Exp}
We have implemented both algorithms {\sc  VarGerdt} and {\sc Gerdt} in {\tt Maple} 14\footnotemark\footnotetext{The {\tt Maple} code of our programs and examples is available on the Web page {\tt http://invo.jinr.ru/}}. For an efficient implementation of {\sc Gerdt} algorithm, we refer to \cite{blinkov}. It is worth noting that, in this paper, we are willing to compare the structure and behavior of {\sc  VarGerdt} and {\sc Gerdt} algorithms on the same platform. Therefore, we do not compare our implementations with \cite{blinkov}.

To compare the behavior of these algorithms, we used some well-known examples from the the collection \cite{Bini} have been widely used for verification and comparison of different software packages for
construction of \Gr bases. In our benchmarking with these examples we applied the following selection strategy for the polynomials in $Q$. An element at the initialization step to be inserted in set $Q$ as well as that selected from $Q$ in the main loop has the lowest leading monomial with respect to the degree-reverse-lexicographic ordering. In the case of several such polynomials  that is selected whose ancestor has been examined earlier than the ancestors of the other polynomials with the identical leading monomials.

 The results are shown in the following tables where computation was performed on a personal computer  with  2.33 GHz, 2$\times$Intel(R) Xeon(R) Quad core, 16 GB RAM and $64$ bites under the Linux operating system. The computation was done over $\mathbb{Q}$ and the monomial ordering is always the degree-reverse-lexicographical one.

 \begin{center}
\begin{figure}[H]
\centering {\tiny
\begin{tabular}{|c||c|c|c|c|c|c|c|}
\cline{1-8}
Noon$4$  & time  & memo. & reds.  & $C_1$ & $C_2$ &R & deg.\\
\cline{1-8}
\noalign{\vskip-10pt} & & & & & & &\\
{\sc VarGerdt}  & 14.29 & 819.2 & 56  & 6 & 19 & 0 &10\\
\cline{1-8}
{\sc Gerdt}  &   14.92 & 821.4 & 50  & 6 & 19 & 0 &9 \\
\cline{1-8}
\multicolumn{1}{c}{}  \\

\cline{1-8}
Cyclic$5$  & time  & memo. & reds.  & $C_1$ & $C_2$ &R & deg.\\
\cline{1-8}
\noalign{\vskip-10pt} & & & & & & &\\
{\sc VarGerdt}  & 12.15 & 701.8 & 77  & 36 & 9 & 18 &9\\
\cline{1-8}
{\sc Gerdt}  &   12.80 & 686.6 & 83  & 40 & 5 & 0 &8 \\
\cline{1-8}
\multicolumn{1}{c}{}  \\

\cline{1-8}
Sturmfels  & time  & memo. & reds.  & $C_1$ & $C_2$ &R & deg.\\
\cline{1-8}
\noalign{\vskip-10pt} & & & & & & &\\
{\sc VarGerdt}  & 64.29 & 4152.6 & 91  & 43 & 212 & 0 &6\\
\cline{1-8}
{\sc Gerdt}  &   80.72 & 5103.7 & 80  & 42 & 218 & 0 &5 \\
\cline{1-8}
\multicolumn{1}{c}{}  \\

\cline{1-8}
Katsura$5$  & time  & memo. & reds.  & $C_1$ & $C_2$ &R & deg.\\
\cline{1-8}
\noalign{\vskip-10pt} & & & & & & &\\
{\sc VarGerdt}  & 97.18 & 9636.8 & 47  & 22 & 1 & 0 &12\\
\cline{1-8}
{\sc Gerdt}  &   54.50 & 4731.8 & 47  & 22 & 1 & 0 &10 \\
\cline{1-8}
\multicolumn{1}{c}{}  \\

\cline{1-8}
Eco$7$  & time  & memo. & reds.  & $C_1$ & $C_2$ &R & deg.\\
\cline{1-8}
\noalign{\vskip-10pt} & & & & & & &\\
{\sc VarGerdt}  & 102.62 & 6066.8 & 132  & 51 & 48 &20 &6\\
\cline{1-8}
{\sc Gerdt}  &   80.21 & 4249.7 & 124  & 46 & 40 & 0 &4 \\
\cline{1-8}
\multicolumn{1}{c}{}  \\

\cline{1-8}
Liu  & time  & memo. & reds.  & $C_1$ & $C_2$ &R & deg.\\
\cline{1-8}
\noalign{\vskip-10pt} & & & & & & &\\
{\sc VarGerdt}  & 2.65 & 155.1 & 18  & 6 & 3 & 0 &0\\
\cline{1-8}
{\sc Gerdt}  &   3.81 & 236.0 & 18  & 6 & 3 & 0 &0 \\
\cline{1-8}
\multicolumn{1}{c}{}  \\

\cline{1-8}
Lichtblau  & time  & memo. & reds.  & $C_1$ & $C_2$ &R & deg.\\
\cline{1-8}
\noalign{\vskip-10pt} & & & & & & &\\
{\sc VarGerdt}  & 111.21 & 22302.9 & 25  & 0 & 9 & 1 &11\\
\cline{1-8}
{\sc Gerdt}  &   $> 8$ hours & ? & ?  & ? & ? & ? &? \\
\cline{1-8}
\multicolumn{1}{c}{}  \\

\cline{1-8}
Katsura$6$  & time  & memo. & reds.  & $C_1$ & $C_2$ &R & deg.\\
\cline{1-8}
\noalign{\vskip-10pt} & & & & & & &\\
{\sc VarGerdt}  & 213.59 & 16124.3 & 128  & 44 & 3 & 0 &8\\
\cline{1-8}
{\sc Gerdt}  &   4227.32 & 1632783.6 & 128  & 44 & 3 & 0 &7 \\
\cline{1-8}
\multicolumn{1}{c}{}  \\

\cline{1-8}
Cyclic$6$  & time  & memo. & reds.  & $C_1$ & $C_2$ &R & deg.\\
\cline{1-8}
\noalign{\vskip-10pt} & & & & & & &\\
{\sc VarGerdt}  & 1313.88 & 222625.1 & 542  & 169 & 7 & 47 &11\\
\cline{1-8}
{\sc Gerdt}  &   2571.84 & 655916.5 & 476  & 152 & 18 & 0 &10 \\
\cline{1-8}
\multicolumn{1}{c}{}  \\

\end{tabular}}
\end{figure}
 \end{center}

The time (resp. memo., reds., $C_1$ and $C_2$) column shows the consumed CPU time in seconds (resp. amount of megabytes of memory used, number of reductions to zero,  the number of polynomials removed by $C_1$ and $C_2$ criteria) by the corresponding algorithm. The seventh column indicates the number of polynomials eliminated by {\em Rewritten} criterion: in {\sc  VarGerdt} algorithm, as well as in {\sc Gerdt} algorithm, when the  polynomial part of a triple $\{f,g,V\}$ reduces to zero, and $f=g$, then we can discard any polynomial whose ancestor is $f$. We call this criterion the Rewritten criterion. The last column shows the largest degree of intermediate polynomials treated during the computation of involutive bases.

A comparison of the timing columns in the above tables and our test for some other examples  shows  that {\sc VarGerdt} is more efficient and stable than {\sc Gerdt}. Indeed, the main difference between these algorithms is that in {\sc Gerdt} algorithm, we do first the head reduction of all non-multiplicative prolongations (which have not been examined yet) and then we choose one of them to complete its involutive reduction to the full normal form, while in {\sc VarGerdt} we choose a polynomial (among the non-multiplicative prolongations), and we compute its full normal form.

In Gerdt's algorithm the head involutive reduction of the whole set of non-multiplicative prolongation is done in order to provide a platform for matching a good selection strategy. We refer to paper \cite{GB07} for details on heuristically good selection strategies for the Janet division. The choice of such selection strategy as well as the use of proper data structures for fast search of involutive divisor \cite{gerdtnew} plays a key role in providing a high computational efficiency of the involutive algorithms. In this paper we do not consider these important aspects of the practical construction of involutive bases and compare two algorithms experimentally for their simplest and identical for the both algorithms implementation.

\section*{Acknowledgements.}  The research presented in the paper was completed during the stay of the second author (A. Hashemi) at the Joint Institute for Nuclear Research in Dubna, Russia. He would like to thank Professor V. P. Gerdt  for the invitation, hospitality and support. The contribution of the first author (V .P. Gerdt) was partially supported by by grant 01-01-00200 from the Russian Foundation for Basic Research and by grant 3810.2010.2 from the Ministry of Education and Science of the Russian Federation.

\bibliographystyle{alpha}

\end{document}